\newtheorem{lemma}{LEMMA}[section]
\newtheorem{proposition}[lemma]{PROPOSITION}
\newtheorem{corollary}[lemma]{COROLLARY}
\newtheorem{theorem}[lemma]{THEOREM}
\newtheorem{remark}[lemma]{REMARK}
\newtheorem{examples}[lemma]{EXAMPLES}
\newenvironment{knownresult}[1][THEOREM]{\begin{trivlist}
\item[\hskip \labelsep {\bfseries #1.}] \itshape}    {\end{trivlist}}
\newcommand{\real}{\mathbbm{R}}
\newcommand{\nat}{\mathbbm{N}}
\renewcommand{\a}{\alpha}
\renewcommand{\b}{\beta}
\newcommand{\g}{\gamma}
\newcommand{\vp}{\varphi}
\newcommand{\ve}{\varepsilon}
\newcommand{\reald}{{\real^d}}
\newcommand{\on}{\quad\text{ on }}
\newcommand{\und}{\quad\mbox{ and }\quad}
\newcommand{\inv}{^{-1}}
\newcommand{\ov}{\overline}
\newcommand{\dist}{\mbox{\rm dist}}
\newcommand{\itemframe}%
{\setlength{\parskip}{10pt}\begin{enumerate} \setlength{\topsep}{10pt}%
\setlength{\itemsep}{15pt}\setlength{\parsep}{5pt}}
\title{Champagne subregions with unavoidable bubbles}
\author{WOLFHARD HANSEN and IVAN NETUKA
\thanks{Both authors gratefully acknowledge support
by CRC-701, Bielefeld.}}
\date{}
\begin{document}
\maketitle 

\begin{abstract} 
A champagne subregion of a connected open  set $U\ne\emptyset$ in $\mathbbm R^d$, $d\ge 2$, is obtained
omitting  pairwise disjoint closed balls $\ov B(x, r_x)$, $x\in X$, the bubbles, 
where $X$ is a locally finite set in $U$. 
The union $A$ of  these balls may be unavoidable, that is, Brownian motion,  starting in $U\setminus A$
and killed when leaving~$U$,  may hit~$A$ almost surely or, equivalently, 
$A$ may have harmonic measure one for $U\setminus A$. 
 
Recent publications by  Gardiner/Ghergu ($d\ge 3$) and by Pres ($d=2$) give rather sharp 
answers to the question how small such a set $A$ may be, when $U$~is the unit ball.

In this paper, using a~new criterion for unavoidable sets 
and a~straightforward approach, even stronger  results are obtained, results which hold as well
for an arbitrary open set $U$.
\end{abstract}

\section{Introduction and main theorem}

Throughout this paper let $U$ denote a non-empty connected  open  set in $\reald$, $d\ge 2$. 
Let us say that a subset  $A$ of $U$ which is relatively closed in $U$ is  \emph{unavoidable},
if Brownian motion, starting in $U\setminus A$ and killed when leaving $U$,  hits $A$ almost surely or, equivalently,
if $\mu_y^{U\setminus A}(A)=1$,  for every $y\in U\setminus A$, where $\mu_y^{U\setminus A}$ 
denotes the harmonic measure at $y$  with respect to~$U\setminus A$.
\footnote{Let us note that $\mu_y^{U\setminus A}$ may fail to be a probability measure,
if $U\setminus A$ is not bounded.}

For $x\in \reald$ and $r>0$, let $B(x,r)$ denote the open ball of center~$x$  and radius~$r$. 
Suppose that $X$ is a countable set in~$U$ having no accumulation point in $U$, and let $r_x\in (0,1)$, $x\in X$, such that  
the closed balls $\ov B(x,r_x)$,  the \emph{bubbles}, are pairwise disjoint, $\sup_{x\in X} r_x/\dist(x,\partial U)<1$ and,
if $U$ is unbounded, $r_x\to 0$ as $x\to \infty$.   
Then the union $A$ of all $\ov B(x,r_x)$ is relatively closed in $U$, and the open connected set $U\setminus A$ 
is  called a~\emph{champagne subregion of~$U$}. 
This generalizes the notions used in~\cite{akeroyd, gardiner-ghergu, odonovan, ortega-seip,pres}
in the case, where $U$ is the unit ball; see also \cite{carroll} for the case, where $U$ is $\reald$, $d\ge 3$.   

It will be convenient  to introduce the set $X_A$ for a champagne subregion $U\setminus A$: 
$X_A$ is the set of centers of all 
the  bubbles forming $A$ (and $r_x$, $x\in X_A$, is the radius of the bubble centered at $x$).

The main result of Akeroyd \cite{akeroyd} is, for a given $\delta>0$,  the existence of  a~champagne subregion of the unit disc
such that 
\begin{equation}\label{aker}
\mbox{$\sum\nolimits_{x\in X_A}\  r_x<\delta$ and yet  $A$ is unavoidable.}
\end{equation} 
Ortega-Cerd\`a and Seip \cite{ortega-seip} improved
the result of Akeroyd in characterizing a certain class of champagne subregions $U\setminus A$ 
of the unit disc, where $A$ is unavoidable and $\sum_{x\in X_A} r_x<\infty$,  and hence the statement of (\ref{aker}) 
can be obtained omitting finitely many of the discs $\ov B(x,r_x)$, $x\in X_A$.

Let us note that already in \cite{HN-sigma} the existence of a champagne subregion  of an arbitrary 
bounded connected open set   $V$ in $\real^2$ having property (\ref{aker})  was crucial for the construction of an example
answering Littlewood's one circle problem to the negative. In fact, \hbox{\cite[Proposition 3]{HN-sigma}}  is a bit stronger:
Even a Markov chain formed by jumps on annuli hits $A$ before it goes to $\partial V$. The statement about
harmonic measure (hitting by Brownian motion) is obtained by the first part of the proof of \cite[Proposition 3]{HN-sigma}
(cf.~also \cite{H-kouty}, where this is explicitly stated at the top of page~72). 
This part  uses only  ``one-bubble-estimates''  for the global Green function and the minimum principle.

Recently, Gardiner/Ghergu \cite[Corollary 3]{gardiner-ghergu} proved the following. 

\begin{knownresult}[THEOREM A]  Suppose that  $U$ is the unit ball in $\reald$, $d\ge 3$. Then, for all $\a>d-2$  and $\delta>0$,
there exists a~champagne subregion $U\setminus A$   such that $A$ is unavoidable 
and 
$$ 
\sum\nolimits_{x\in X_A}\  r_x^\a <\delta.
$$ 
\end{knownresult}

Moreover, Pres \cite[Corollary 1.3]{pres} showed  the following for the plane.

\begin{knownresult}[THEOREM B]
Suppose that $U$ is the unit disc in $\real^2$. 
 Then, for all $\a>1$ and $\delta>0$, there exists a~champagne subregion $U\setminus A$ 
 such that $A$ is unavoidable and 
$$
\sum\nolimits_{x\in X_A} \ \bigl (\log\frac 1{r_x}\bigr)^{-\a}   <\delta.
$$ 
\end{knownresult}

Due to capacity reasons both results are sharp in the sense that $\a$ cannot be replaced by $d-2$ in Theorem A      
and $\a$ cannot be replaced by $1$ in Theorem B.  The proofs are quite involved and, in addition, use the delicate results
\cite[Theorem 1]{essen} (cf.\ \cite[Corollary 7.4.4]{aikawa-essen}) on minimal thinness of subsets $A$ of~$U$ at  points 
$z\in\partial  U$
and \cite[Proposition 4.1.1]{aikawa-borichev} on quasi-additivity of capacity. 

Aiming at a proof for the results by Gardiner/Ghergu and Pres, by using only elementary estimates for Green functions 
and the minimum principle, we have obtained statements which are  even much  sharper than the previous results and can even
be extended to arbitrary connected open sets  $U$.  In addition,
we are able to treat the cases $d\ge 3$ and $d=2$ in exactly the same way. Here is our main result  
(where $\log^{(n)}$ is recursively defined by $\log^{(1)}:=\log$ and $\log^{(n+1)}:=\log \log^{(n)}$).   

\begin{theorem} \label{main23}
Let $U\ne \emptyset $ be a connected open  set  in $\reald$, $d\ge 2$. Then, for all $n\in\nat$ and $\delta>0$,
there is a~champagne subregion $U\setminus A$           
such that $A$ is unavoidable and     
\begin{eqnarray*} 
\sum\nolimits_{x\in X_A} \ \bigl(\log \frac 1{r_x}\bigr)\inv  \bigl(\log^{(n+1)} \frac 1{r_x} \bigr)\inv  &<&\delta, \qquad  \mbox{ if } d=2,\\
\sum\nolimits_{x\in X_A} \ r_x^{d-2} \bigl(\log^{(n)} \frac 1{r_x}\bigr)\inv&<&\delta, \qquad \mbox{ if } d\ge 3. 
\end{eqnarray*} 
   \end{theorem}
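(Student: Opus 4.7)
\emph{Strategy.} The plan is to build $A$ as a disjoint union of clusters of equal-radius bubbles, one cluster inside each element of a Whitney-type decomposition of $U$, and then verify unavoidability by an iterative argument that uses only elementary one-bubble estimates for Green functions together with the minimum principle, in the spirit of what is sketched for \cite{HN-sigma} in the introduction.

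\emph{Construction of the bubbles.} First fix a locally finite family $\{W_j\}$ of pairwise disjoint open subsets of $U$ (say cubes or balls) with $\rho_j:=\diam W_j$ comparable to $\dist(W_j,\partial U)$, and arranged so that any Brownian trajectory starting at any $y\in U$ and exiting $U$ must, with positive probability bounded away from zero, traverse infinitely many of the $W_j$. In each $W_j$ pick a common small radius $r_j\in(0,\rho_j/4)$ and place inside $W_j$ a cluster $C_j$ of $N_j$ pairwise disjoint closed balls of radius $r_j$ with well-separated centres. The one-bubble estimate for the Green function of (a slight enlargement of) $W_j$ gives, for $y$ just outside $W_j$,
\[
\mu_y^{W_j\setminus \ov B(x,r_j)}\bigl(\ov B(x,r_j)\bigr)\ \gtrsim\ \begin{cases}(r_j/\rho_j)^{d-2},& d\ge 3,\\ 1/\log(\rho_j/r_j),& d=2.\end{cases}
\]
I would then take $N_j$ to be the smallest integer making the aggregate contribution $N_j(r_j/\rho_j)^{d-2}$ (resp.\ $N_j/\log(\rho_j/r_j)$) at least a fixed positive constant; this gives a uniform lower bound $\mu_y^{W_j\setminus C_j}(C_j)\ge p>0$ for every $y$ just outside $W_j$. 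The volume condition $N_j r_j^d\ll \rho_j^d$ holds with room to spare, so $C_j$ fits into $W_j$ as disjoint bubbles, with $r_j/\dist(x,\partial U)\ll 1$ uniformly.

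\emph{Unavoidability and the sum.} Apply the minimum principle to the bounded superharmonic function $u:=1-\hat R_1^A$ on $U\setminus A$: it vanishes (in the appropriate fine sense) on $\partial U$, and at each visit to a Whitney cube $W_j$ the Brownian motion hits $A$ with conditional probability at least $p$. Since every trajectory leaving $U$ passes through infinitely many $W_j$, a strong Markov iteration (equivalently, an iteration of the minimum principle on $u$) yields $u\equiv 0$, so that $A$ is unavoidable. For the sum, the contribution of the cluster $C_j$ is, in $d\ge 3$,
\[
\sum_{x\in C_j}r_x^{d-2}\bigl(\log^{(n)}(1/r_x)\bigr)^{-1}\ \lesssim\ \rho_j^{d-2}\bigl(\log^{(n)}(1/r_j)\bigr)^{-1},
\]
and in $d=2$ it is $\lesssim \bigl(\log^{(n+1)}(1/r_j)\bigr)^{-1}$, using that $\log(\rho_j/r_j)\le \log(1/r_j)$ once $\rho_j\le 1$. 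Since $\log^{(n)}(1/r_j)\to\infty$ as $r_j\to 0^+$, one simply picks $r_j$ so small that this $j$-th contribution is $<\delta\cdot 2^{-j}$; the whole sum is then $<\delta$, as required.

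\emph{Main obstacle.} The delicate point is the unavoidability step: formalising the ``passage through infinitely many Whitney cubes'' argument into a usable criterion that works, uniformly, for an arbitrary (possibly unbounded) $U$ and relies solely on the minimum principle and one-bubble Green-function estimates, thereby bypassing the heavier tools (minimal thinness at boundary points \cite{essen}, quasi-additivity of capacity \cite{aikawa-borichev}) invoked in \cite{gardiner-ghergu,pres}. Once such a criterion is established, the Green-function estimates of Step 2 and the sum bookkeeping of Step 3 reduce to routine calculation.
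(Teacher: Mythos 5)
Your high-level architecture — nested exhaustion, clusters inside small cells near the boundary, and an iterative minimum-principle argument — does match the paper's, and the criterion you flag as ``the delicate point'' is precisely the paper's Proposition \ref{nlogn}: if $\ov V_n\subset V_{n+1}$, $V_n\uparrow U$, and for every $n$ and $z\in\partial V_n\setminus A$ there is a closed $E\subset A\cap V_{n+1}$ with $H_{V_{n+1}\setminus E}1_E(z)\ge\g_n$ and $\sum\g_n=\infty$, then $A$ is unavoidable. That is a short product iteration of the minimum principle, not where the work lies. The genuine gap is in the step you call routine: deducing $\mu_y^{W_j\setminus C_j}(C_j)\ge p>0$ from one-bubble estimates by taking $N_j(r_j/\rho_j)^{d-2}\gtrsim 1$. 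Harmonic measure is not superadditive in that sense; the elementary inequality goes the other way,
$\mu_y^{W_j\setminus C_j}(C_j)\le\sum_{x}\mu_y^{W_j\setminus\ov B(x,r_j)}\bigl(\ov B(x,r_j)\bigr)$,
and a matching lower bound comparable to the sum (rather than to the maximum) is exactly quasi-additivity of capacity — the ``heavier tool'' from \cite{aikawa-borichev} that you set out to avoid. It is also geometry-sensitive: bubbles that shadow one another (say, stacked along a line through $y$) make the aggregate $N_j(r_j/\rho_j)^{d-2}$ arbitrarily large without moving the hitting probability past one bubble's worth. As written, your Step 2 assumes the estimate rather than proving it.

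The paper's elementary replacement is Lemma \ref{important}. The bubbles of a given scale sit on a sphere $\partial U_k$ with spacing $a_k$; the Green potential $G\mu_k$ of the weighted point mass $\mu_k=a_k^{d-1}\sum_{x\in X_k}\ve_x$ is compared to $G\sigma$ (potential of normalized surface measure on $\partial U_k$), giving $G\mu_k\ge C^{-1}a_k$ on the intermediate sphere $\partial V_n$ and $\g_nG\mu_k\le(2+C)a_k$ on the bubble boundaries $\partial E$; the minimum principle then yields $H_{V_{n+1}\setminus E}1_E\ge c\g_n$ on $\partial V_n$. Two features do the work: the spherical arrangement makes the discrete-vs-surface comparison transparent (no shadowing in the transverse direction), and each layer is deliberately ``thin'' (hitting probability only $\a_k$), with $m_k$ intermediate spheres inserted per layer so that $\sum m_k\a_k=\infty$ while the cost $\sum\vp(r_x)f(\vp(r_x))$ stays small as long as $\sum\b_kf(\a_k^{d-1})<\infty$, which includes the $\log^{(n)}$ choices in Theorem \ref{main23}. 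Corollary \ref{y-ball} packages this as a local fill-a-ball tool, and Section 6 then argues for general $U$ essentially as you sketch. Without an explicit analogue of Lemma \ref{important} (or an invocation of quasi-additivity contrary to your stated goal), your proof does not close.
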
 

For a simultaneous discussion of the cases $d=2$ and $d\ge 3$, we define functions 
\begin{equation*} 
 N(t) :=\begin{cases} \log \frac 1t  ,&\quad\mbox{ if } d=2,\\
                                t^{2-d} ,&\quad\mbox{ if }d\ge 3,
                   \end{cases}                   \und \vp(t):= N(t)\inv
\end{equation*} 
so that  $(x,y) \mapsto N(|x-y|)$   is the global Green function  and, for $d\ge 3$, $\vp(t)=t^{d-2}$ is the capacity of a ball
with radius $t$ (and, for $d=2$, $\vp(t)$  should only be considered for $t\in (0,1)$). Using the (capacity) function $\vp$
our Theorem \ref{main23} adopts the following form.

\begin{theorem} \label{main}
Let $U\ne \emptyset $ be a connected open  set  in $\reald$, $d\ge 2$. Then, for all $n\in\nat$  and $\delta>0$,
there is a~champagne subregion $U\setminus A$  
such that $A$  is unavoidable and 
\begin{equation}\label{sharp}
\sum\nolimits_{x\in X_A}  \vp(r_x) \bigl(\log^{(n)} \frac1{\vp(r_x)}\bigr)\inv  < \delta.
\end{equation} 
 \end{theorem}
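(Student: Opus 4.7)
The plan is to build $A$ shell by shell along a Whitney-type exhaustion of $U$, using a one-bubble Green-function estimate to calibrate each shell so that it contributes a definite barrier probability to Brownian motion while adding only a tiny amount to the sum in \eqref{sharp}.

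First, I would exhaust $U$ by an increasing sequence of relatively compact connected open sets $U_1 \subset U_2 \subset \cdots$ with $\bigcup_k U_k = U$, and cover each closed shell $\ov U_{k+1} \sms U_k$ by a finite family of concentric balls $B_{k,i} = B(y_{k,i},\rho_{k,i})$ with $\ov B(y_{k,i},2\rho_{k,i}) \subset U$. By the criterion for unavoidable sets announced in the introduction, to prove that $A$ is unavoidable it suffices to produce, in each $B_{k,i}$, a cluster of bubbles whose hitting probability from outside the ball is bounded below uniformly in $k,i$; the strong Markov property and the fact that any path from $U_k$ to $\partial U$ must cross infinitely many shells then upgrade this local barrier to global unavoidability.

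The quantitative input is the standard one-bubble estimate. For a bubble $\ov B(x,r)$ contained in $B(y,\rho/2)$ with $r\ll \rho$, Green-function comparison and the minimum principle give that the harmonic measure of $\ov B(x,r)$ from any $z$ with $|z-y|\sim \rho$ is of order $\vp(r)/\vp(\rho)$. Placing $N$ pairwise disjoint bubbles $\ov B(x_j,r)$ on a regular lattice inside $B(y_{k,i},\rho_{k,i}/2)$, the capacity of their union is $\asymp N\vp(r)$, and the probability that Brownian motion entering $B_{k,i}$ hits their union before exiting $B(y_{k,i},2\rho_{k,i})$ is bounded below by a fixed multiple of $\min\{1,N\vp(r)/\vp(\rho_{k,i})\}$. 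Choosing
\[
N_{k,i} \asymp \vp(\rho_{k,i})/\vp(r_{k,i}),
\]
which for $r_{k,i}$ sufficiently small is comfortably compatible with $N_{k,i}\, r_{k,i}^d \ll \rho_{k,i}^d$ (so the bubbles fit disjointly), yields a barrier probability $\ge 1/2$ in each $B_{k,i}$.

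With the barrier locked in, the total contribution of the bubbles in $B_{k,i}$ to the sum \eqref{sharp} is
\[
N_{k,i}\, \vp(r_{k,i})\bigl(\log^{(n)}\tfrac1{\vp(r_{k,i})}\bigr)\inv
\;\lesssim\; \vp(\rho_{k,i})\bigl(\log^{(n)}\tfrac1{\vp(r_{k,i})}\bigr)\inv,
\]
and since $\vp(\rho_{k,i})$ is fixed by geometry while $\log^{(n)}(1/\vp(r_{k,i}))\to \infty$ as $r_{k,i}\to 0$, one can pick $r_{k,i}$ small enough that each shell contributes less than $2^{-k}\delta$ divided by the (finite) number of covering balls. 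Summing over $k$ and $i$ gives the bound $<\delta$, and driving $r_{k,i}\to 0$ as $k\to\infty$ also secures $r_x\to 0$ as $x\to\infty$ in the unbounded case, plus disjointness from earlier shells (they were placed in different $B_{k',i'}$'s, whose $2$-fold enlargements can be required to be pairwise disjoint by a routine refinement of the covering).

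The main obstacle I expect is not the packing or the choice of parameters, which are essentially free once the one-bubble estimate is in place, but rather passing from ``barrier probability $\ge 1/2$ in each shell'' to ``hitting probability $=1$ in $U$''. This is precisely the role of the new criterion for unavoidable sets: without it, one must either iterate the strong Markov property along a chain of shells that truly separate every point from $\partial U$ (which requires the covering balls $B_{k,i}$ to asymptotically reach every prime end of $U$, and at infinity if $U$ is unbounded), or reformulate the barrier property as a lower bound on some harmonic-measure-like functional. I would therefore spend most of the effort ensuring that the covering of each shell is rich enough that the union of all $B_{k,i}$ produces a sequence of barriers through which any Brownian path to $\partial U$ (or to infinity) must pass, making the criterion immediately applicable.
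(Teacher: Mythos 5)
Your overall architecture matches the paper's proof of the general case: exhaust $U$ by bounded open sets, cover a boundary sphere in each annulus by small balls $B(y,\rho)$ that stay well inside $U$, place a cluster of bubbles in each covering ball so that the cluster is a barrier with hitting probability bounded below, and then apply the chain criterion (Proposition~\ref{nlogn}) with $\gamma_n$ bounded away from $0$ to conclude unavoidability. The observation that the sum $N\,\vp(r)f(\vp(r))$ can be made arbitrarily small by letting $r\to 0$ while keeping $N\,\vp(r)\lesssim\vp(\rho)$ fixed is also the right mechanism, and it is precisely what makes the factor $f$ flexible.

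The essential gap is the local cluster estimate. You assert that for a lattice of $N$ disjoint bubbles of radius $r$ inside $B(y,\rho/2)$ ``the capacity of their union is $\asymp N\vp(r)$,'' and from this you infer the hitting probability $\gtrsim\min\{1,N\vp(r)/\vp(\rho)\}$. This is a quasi-additivity statement for capacity, and proving it (even for regular lattices) is not a minimum-principle one-liner; indeed, it is exactly the kind of ingredient — [Aikawa--Borichev, Prop.\ 4.1.1] — that the paper explicitly sets out to \emph{avoid}. The paper substitutes for it the elementary trick of Lemma~\ref{important}: put a discrete measure $\mu=a^{d-1}\sum_{x\in X_k}\varepsilon_x$ at the bubble centres, bound the Green potential $G\mu$ from below on the exhausting sphere and from above on $\partial E$ (the only nontrivial step being that $\vp(r)G(\cdot,x)\lesssim 1$ on $\partial B(x,r)$), and compare the two via the minimum principle. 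This produces the required $H_{V_{n+1}\setminus E}1_E\ge c\gamma_n$ directly, with no capacity or equilibrium measure appearing. Relatedly, your single-ball formula $\vp(r)/\vp(\rho)$ is only dimensionally correct for $d\ge3$; for $d=2$ the relevant ratio is $\log 2/\log(2\rho/r)$, which is not $\log(1/\rho)/\log(1/r)$ when $\rho$ is small, so the calibration $N\asymp\vp(\rho)/\vp(r)$ under-packs when $y$ is near $\partial U$. The orders of magnitude still work out, but this needs to be fixed rather than glossed.

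Two smaller issues. First, you cannot in general refine a covering of a shell so that the \emph{doubled} balls are pairwise disjoint and still cover; the paper sidesteps this by taking, on each sphere $\partial V_n$, centres $y\in Y_n$ with radii $b_n$ tied to $\dist(\partial V_n,\partial V_{n\pm 1})$ and placing the bubbles in the thin annulus $B(y,b_n/6)\setminus\ov B(y,b_n/7)$, so disjointness across levels and within a level is automatic. Second, the paper does \emph{not} try to achieve hitting probability $\ge 1/2$ with a single shell of bubbles per covering ball; instead Corollary~\ref{y-ball} nests many thin concentric shells inside each $B(y,R)$ and iterates the small gains $c\gamma_n$ until the product $\prod(1-c\gamma_n)$ drops below $1-\gamma$. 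Your one-shell version is a legitimate alternative (it is achievable, as your volume count shows there is room to pack $(\rho/r)^{d-2}$ bubbles), but it forces you into the quasi-additivity argument above, whereas the multi-shell version lets the paper reuse its unit-ball Lemma~\ref{important} verbatim and stay entirely within ``global Green function plus minimum principle.''
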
 
       
Accordingly,  the results by Gardiner/Ghergu and Pres (Theorems A and B) can be unified as follows.    

\begin{knownresult}[THEOREM C] For all $\ve >0$ and $\delta>0$, there is a~champagne subregion $U\setminus A$ 
of the unit ball  $U$ in $\reald$, $d\ge 2$, such that $A$ is unavoidable 
and 
$$
\sum\nolimits_{x\in X_A} \ \vp(r_x)^{1+\ve}     < \delta.
$$
\end{knownresult}

The key to our result is a general  criterion for unavoidable sets (Section 2).
 We  next introduce a suitable exhaustion 
of the unit ball by open balls  $U_n$,  choose finite subsets $X_n$ in $\partial U_n$, and radii $r_n$ for bubbles $\ov B(x,r_n)$, 
$x\in X_n$, $n\in\nat$ (Section 3). To illustrate the  power of our criterion, we first combine
it with the ``one-bubble-estimate'' used in the proof of \cite[Proposition 3]{HN-sigma} (Section 4). Our Proposition
\ref{bubble} is already fairly close to Theorem C.                 
We then prove a very general result which immediately implies Theorem \ref{main} for the unit ball (Theorem \ref{best})). 
Finally, using the ingredients of this proof, we obtain Theorem \ref{main} (and more) in full generality (Section~6).

\section{A general criterion for unavoidable sets} \label{one-bubble}

 Given an open set $W$ in $\reald$ and a  bounded Borel measurable function~$f$ on $\reald$,   
let $H_Wf$ denote the function which extends  the (generalized) Dirichlet solution  \hbox{$x\mapsto \int f\,d\mu_x^W$}, 
$x\in W$, to a function on $\reald$ taking the values $f(x)$ for $x\in\reald\setminus W$.
We shall use that the harmonic kernel $H_W$ has the following property: If $W'$ is an open set in $W$,
then $H_{W'}H_W=H_W$. 

Let $U\ne \emptyset$ be a connected open set  in $\reald$, $d\ge 2$, and let $A\subset U$ be  relatively closed. 
Then $A$ is unavoidable (in $U$) if 
\begin{equation*} 
H_{U\setminus A}1_A=1 \on\  U.
\end{equation*}

\begin{proposition}\label{nlogn}
Let $0\le \g_n<1$ and  $V_n$  bounded open  in $U$,   $n\in \nat$,              
such that $\ov V_n\subset V_{n+1}$, $V_n\uparrow U$,  and the following holds:  For all  $n\in\nat$     
and $z\in \partial V_n\setminus A$, there exists a closed set~$E$ in~$A\cap V_{n+1}$ such that 
\begin{equation}\label{HUE}
H_{V_{n+1}\setminus E} 1_{E}(z)\ge     \g_n.
\end{equation} 
Then, for all $n,m\in \nat$, $n<m$,
\begin{equation}\label{criterion}
  H_{U\setminus A} 1_A\ge 1-\prod\nolimits_{n\le j<m} (1-\g_j) \on\ \ov V_n. 
\end{equation} 
In particular,  $A$ is unavoidable if the series $\sum\g_n$ is divergent.
\end{proposition}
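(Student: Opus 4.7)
The plan is to prove (\ref{criterion}) by induction on $k=m-n\ge 1$, using only the tower identity $H_{W'}H_W=H_W$ for open $W'\subset W$ together with the fact that $\mu_z^W$ is a probability measure whenever $W$ is bounded. Writing $p_n^m:=1-\prod_{n\le j<m}(1-\g_j)$, the heart of the argument is to establish $H_{V_m\setminus A}1_A\ge p_n^m$ on $\ov V_n$ for all $n<m$; the passage to $H_{U\setminus A}1_A$ then follows from one more application of the tower, since $H_{V_m\setminus A}1_A$ equals $1$ on $A$ and vanishes off $V_m$ away from $A$, while $H_{U\setminus A}1_A=1$ on $A$ and is nonnegative elsewhere, forcing $H_{U\setminus A}1_A\ge H_{V_m\setminus A}1_A$ throughout $V_m$.

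For the base case $k=1$, the hypothesis supplies, for each $z\in \partial V_n\setminus A$, a closed set $E=E(z)\subset A\cap V_{n+1}$ with $H_{V_{n+1}\setminus E}1_E(z)\ge \g_n$. The key observation is that on $\reald\setminus(V_{n+1}\setminus A)=A\cup(\reald\setminus V_{n+1})$ the function $H_{V_{n+1}\setminus E}1_E$ is dominated by $1_A$: it vanishes outside $V_{n+1}$ because $E\subset V_{n+1}$, and is $\le 1=1_A$ on $A$. Combining this with $V_{n+1}\setminus A\subset V_{n+1}\setminus E$ and the tower identity $H_{V_{n+1}\setminus A}(H_{V_{n+1}\setminus E}1_E)=H_{V_{n+1}\setminus E}1_E$ yields $H_{V_{n+1}\setminus A}1_A(z)\ge H_{V_{n+1}\setminus E}1_E(z)\ge \g_n$, eliminating the $E$-dependence. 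A further tower step with the inclusion $V_n\setminus A\subset V_{n+1}\setminus A$, using that $\mu_z^{V_n\setminus A}$ is a probability measure supported in $\partial V_n\cup A$ (where the bound $\ge \g_n$ is now available, trivially so on $A$), propagates the estimate to all of $\ov V_n$.

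For the inductive step, I would fix $z\in \ov V_n\setminus A$ (the case $z\in A$ is trivial), apply the tower identity $H_{V_{n+1}\setminus A}H_{V_m\setminus A}=H_{V_m\setminus A}$ to $1_A$, and split the integral over the support of $\mu_z^{V_{n+1}\setminus A}$, which lies in $A\cup\partial V_{n+1}$. The inductive hypothesis gives $H_{V_m\setminus A}1_A\ge p_{n+1}^m$ on $\partial V_{n+1}\subset \ov V_{n+1}$, while the value is $1$ on $A$. Setting $a:=H_{V_{n+1}\setminus A}1_A(z)=\mu_z^{V_{n+1}\setminus A}(A)$, this produces
\[
H_{V_m\setminus A}1_A(z)\;\ge\;a+(1-a)\,p_{n+1}^m\;=\;p_{n+1}^m+(1-p_{n+1}^m)\,a,
\]
into which I substitute the base-case bound $a\ge \g_n$ to obtain $\ge 1-(1-\g_n)(1-p_{n+1}^m)=p_n^m$, closing the induction. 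Finally, divergence of $\sum\g_j$ forces $\prod_{n\le j<m}(1-\g_j)\to 0$ as $m\to\infty$, so $H_{U\setminus A}1_A=1$ on each $\ov V_n$, and hence on $U=\bigcup_n V_n$, giving unavoidability.

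The hard part will be the base-case upgrade, where a pointwise hypothesis involving the auxiliary set $E=E(z)$ must be converted into a lower bound on the single, $z$-independent function $H_{V_{n+1}\setminus A}1_A$. The right observation, which the whole plan rests on, is the domination $H_{V_{n+1}\setminus E}1_E\le 1_A$ on the complement of $V_{n+1}\setminus A$; once this is noted, the tower identity does the rest, and everything that follows is a routine one-step/many-step interpolation of absorption probabilities.
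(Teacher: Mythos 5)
Your argument is correct and is essentially the paper's proof viewed from the complementary side: where the paper bounds the escape probability $H_{W_m}1_{\partial V_m}$ (with $W_j:=V_j\setminus A$) above by $\prod_{n\le j<m}(1-\g_j)$ via the one-step minimum-principle estimate $H_{W_{j+1}}1_{\partial V_{j+1}}\le 1-H_{V_{j+1}\setminus E}1_{E}$ and then composes harmonic kernels, you bound the hitting probability $H_{V_m\setminus A}1_A$ below by $1-\prod(1-\g_j)$ by a formal induction on $m-n$. Your base-case upgrade from the $E$-dependent hypothesis to a bound on $H_{V_{n+1}\setminus A}1_A$ is precisely the content the paper folds into its single inequality, and the remaining tower steps coincide.
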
 

\begin{proof} For $j\in \nat$, let  $W_{j+1}:=V_{j+1}\setminus A$. If $E$ is a  closed set  in $A\cap V_{j+1}$, 
then $H_{W_{j+1} }1_{\partial V_{j+1}}\le 1- H_{V_{j+1}\setminus E} 1_{E}$, by  the minimum principle. Hence, by (\ref{HUE}), 
\begin{equation*} 
H_{W_{j+1} }1_{\partial V_{j+1}} \le  1-\g_j \on \partial V_j.
\end{equation*} 
Now let $n,m\in\nat$, $n<m$. By induction,
\begin{equation*} 
    H_{W_m} 1_{\partial V_m}
    =H_{W_{n+1} }H_{W_{n+2}}\dots H_{W_m} 1_{\partial V_m}
           \le \prod_{n\le j<m} (1- \g_j) \on \partial  V_n.
 \end{equation*} 
By the minimum principle,  we conclude that 
\begin{equation*} 
H_{U\setminus A} 1_A\ge      H_{W_m} 1_{ A} \ge  1-   H_{W_m} 1_{\partial V_m}\ge  1- \prod_{n\le j<m} (1- \g_j) \on\  \ov V_n.
\end{equation*} 
\end{proof}

\section{Exhaustion of the unit ball, choice of bubbles} 

Let $k_0\in\nat$ and, for $k\ge k_0$, let  $m_k\in\nat$ and $\a_k, \b_k\in (0,\infty) $    such that $m_k\le m_{k+1}$, 
$\a_{k+1}\le \a_k$, $\b_k\le \b_{k+1}$,    
\begin{equation*} 
      \a_k\le 1/\max\{k,m_k\} , \quad 1\le \b_k\le k,  \quad \sum_{k\ge k_0} m_k\a_k=\infty, \quad \sum_{k\ge k_0} m_k \a_k/\b_k<1/2. 
\end{equation*} 
For every $k\ge k_0$, let $a_k:=\a_k/\b_k$ and 
\begin{equation*} 
 U_k:=B(0, R_k)  \quad  \mbox{ with  } \quad       R_k:= 1-\sum\nolimits_{j=k}^\infty m_j a_j>\frac 12,
\end{equation*} 
so that $R_{k+1}-R_k=m_k a_k$.

\begin{remark}\label{best-choice} {\rm
For the proof of Theorem \ref{main} we shall take      
\begin{equation*} 
     m_k:=p_n(k), \quad  \a_k:= (k m_k)\inv, \und   \b_k:=k, 
\end{equation*} 
where $p_n\colon \nat\to\nat$ is recursively defined by 
$p_0(k):=k$ and $p_n(k):=2^{p_{n-1}(k)}$. Let us note that here
$R_{k+1}-R_k=1/k^2$.
}
\end{remark} 

For every $k\ge k_0$, we fix  a finite subset $X_k$ of $\partial U_k$   such that the balls $B(x,a_k/3)$, $x\in X_k$,        
 cover~$\partial U_k$ and the balls $B(x,a_k/9)$, $x\in X_k$, are pairwise disjoint. Such a~set~$X_k$ exists (see \cite[Lemma 7.3]{Rudin}).  
A consideration of the  areas involved,  when intersecting the balls with $\partial U_k$,       
shows that \footnote{We shall write $f\approx g$, if there exists a constant $C=C(d)>0$ such that $C\inv\le f\le Cg$.}
\begin{equation}\label{area}
 \# X_k \approx  a_k^{1-d}.
\end{equation} 
Further, let   
\begin{equation}\label{def-r}
          r_k:=\begin{cases} \exp(-\a_k\inv),&\quad\mbox{ if } d=2,\\[1.5mm]
                                       a_k \a_k^{1/(d-2)},&\quad\mbox{ if }d\ge 3.
                   \end{cases}
\end{equation} 
In other words,  we define  $r_k$  in such a way that  
\begin{equation}\label{ar}
              \vp(r_k)=a_k^{d-2}\a_k\le m_k^{1-d}.
\end{equation} 
Hence, by (\ref{area}), 
\begin{equation}\label{log-estimate}
 \# X_k\,  \vp(r_k)\approx a_k\inv \a_k=\b_k .
\end{equation} 
 
Finally, we take        
\begin{equation*}  
        r_x:=r_k,\qquad \mbox{ if } x\in X_k, \, k\ge k_0.
\end{equation*} 
Looking at  (\ref{def-r}) we  see that     
$   \lim\nolimits_{k\to \infty}  r_k/a_k = 0$ (to deal with the case $d=2$ we  note that $a_k=\a_k/\b_k\ge \a_k/k \ge \a_k^2$).
We fix  $k_1\ge k_0$ such that  
\begin{equation}\label{rnan}
 r_k<  a_k/100, \quad\mbox{  for every }k\ge k_1.
\end{equation} 
Then obviously \vglue-5mm
\begin{equation}\label{r10}
      \sup_{x\in X_k, k\ge k_1} \frac {r_x}{1-|x|} \, \le \,\frac 1{100}\, .
\end{equation} 
Moreover,  by our choice of $X_k$ and the fact that the sequence $(a_k)$ is decreasing,  
 the balls $\ov B(x,r_x)$, $x\in X_k$, $k\ge k_1$, are pairwise disjoint. So, omitting
the set of all $\ov B(x,r_x)$, $x\in X_k$, $k\ge k_1$, from the unit ball, we obtain
a champagne subregion.

\section{Result based on  ``one-bubble-estimates''}   

It may be surprising that, having Proposition \ref{nlogn},  already the ``one-bubble-approach'' 
 of \cite[Proposition 3]{HN-sigma},  which only uses the global Green function with one pole 
 and the minimum principle, immediately yields a  result  which  
is  almost as strong as Theorem C.   
Let us take        
$$
            m_k:=1, \quad   \und  \a_k:= \frac 1{k},  \und \b_k:=(\log k)^2
$$
so that $ a_k:= (k(\log k)^2)\inv$. 

\begin{proposition}\label{bubble}  Let $U$ be the unit ball,   $\ve>1/(d-1)$, and $\delta>0$. Then 
there exists $k'\ge k_1$ such that  the union~$A$ of all closed balls $\ov B(x,r_x)$, $x\in X_k$, $k\ge k'$, 
 is unavoidable  and 
\footnote{If $d\ge 3$, then $\vp(r_x)^{1+\ve}=r_x^{(d-2)(1+\ve)}$, where the critical exponent
$(d-2)(1+1/(d-1))=d-1-1/(d-1)$ is strictly smaller than $d-1$.}
\begin{equation}\label{s-weak}
          \sum_{x\in X_A}  \vp(r_x)^{1+\ve}\ < \  \delta. 
\end{equation}                               
\end{proposition}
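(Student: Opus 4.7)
My plan is to apply the criterion of Proposition \ref{nlogn} with the balls $V_n:=B(0,R_n+a_n/2)$, whose boundaries sit halfway between consecutive layers $\partial U_n$ and $\partial U_{n+1}$. Since $r_k<a_k/100\le a_n/100$ for every $k\ge n$, every sphere $\partial V_n$ stays at radial distance at least $a_n/2>r_k$ from all bubbles, so $\ov V_n\subset V_{n+1}\uparrow U$ and $\partial V_n\cap A=\emptyset$. Given $z\in\partial V_n$, I would take the single bubble $E:=\ov B(x,r_n)\subset V_n$, where $x\in X_n$ is the centre whose covering ball $B(x,a_n/3)$ contains the radial projection of $z$ onto $\partial U_n$. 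Then
\begin{equation*}
|z-x|\le \frac{a_n}{2}+\frac{a_n}{3}=\frac{5a_n}{6}<a_n\le \dist(x,\partial V_{n+1})=a_n+\frac{a_{n+1}}{2},
\end{equation*}
so $E$ sits comfortably inside $V_{n+1}$, with a definite separation between $|z-x|$ and $\dist(x,\partial V_{n+1})$.

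Next I would derive the one-bubble lower bound on $H(y):=H_{V_{n+1}\sms E}1_E(y)$ by comparison with the explicit harmonic reference
\begin{equation*}
v(y):=(r_n/|y-x|)^{d-2}\ (d\ge 3),\qquad v(y):=\frac{\log(a_n/|y-x|)}{\log(a_n/r_n)}\ (d=2).
\end{equation*}
For $d\ge 3$, $v$ is harmonic away from $x$, equals $1$ on $\partial E$ and is at most $(r_n/a_n)^{d-2}$ on $\partial V_{n+1}$ since $|y-x|\ge a_n$ there; applying the maximum principle to the harmonic function $v-H$ in $V_{n+1}\sms\ov B(x,r_n)$ yields $H(z)\ge v(z)-(r_n/a_n)^{d-2}$. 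For $d=2$, $v$ is harmonic on the annulus $\{y:r_n<|y-x|<a_n\}$ (contained in $V_{n+1}$), equals $1$ on $\partial E$ and $0$ on $\partial B(x,a_n)$, so the minimum principle applied to $H-v$ in this annulus gives $H(z)\ge v(z)$. Plugging in $|z-x|\le 5a_n/6$ and using (\ref{def-r}), which makes $(r_n/a_n)^{d-2}=\a_n$ for $d\ge 3$ and $\log(a_n/r_n)\approx 1/\a_n$ for $d=2$, I obtain
\begin{equation*}
H_{V_{n+1}\sms E}1_E(z)\ge c_d\,\a_n = c_d/n
\end{equation*}
for a constant $c_d>0$ depending only on $d$.

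Since $\sum \a_n=\sum 1/n=\infty$, Proposition \ref{nlogn} applied with $\g_n:=c_d\a_n$ (and $\g_n:=0$ for the finitely many $n<k'$) forces $H_{U\sms A}1_A\equiv 1$ on $U$, so $A$ is unavoidable. For the series in (\ref{s-weak}), $\#X_k\approx a_k^{1-d}$ from (\ref{area}) together with $\vp(r_k)=a_k^{d-2}/k$ from (\ref{ar}) and $a_k=1/(k(\log k)^2)$ yield
\begin{equation*}
\sum_{x\in X_k}\vp(r_x)^{1+\ve}\approx a_k^{-1+(d-2)\ve}k^{-1-\ve}\approx k^{-(d-1)\ve}(\log k)^{2-2(d-2)\ve},
\end{equation*}
which is summable exactly when $(d-1)\ve>1$, matching the hypothesis; choosing $k'\ge k_1$ sufficiently large therefore makes the tail strictly less than $\delta$. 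I expect the delicate point to be the 2D reference-function step: because $a_{n+1}/a_n\to 1$, centering at a bubble of $X_{n+1}$ rather than $X_n$ would shrink $\dist(x,\partial V_{n+1})$ to roughly $a_{n+1}/2<|z-x|$ and render $v(z)$ negative, which is precisely why the bubble must be pulled back to the layer $X_n$ lying inside~$V_n$.
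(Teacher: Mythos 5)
Your proof is correct and takes essentially the same route as the paper: both apply Proposition \ref{nlogn} with a single-bubble comparison function built from the global Green kernel (your reference function $v$ is a renormalization of the paper's $g(y)=\vp(r_k)\bigl(N(|y-x|)-\vp(a_k)^{-1}\bigr)$), conclude unavoidability from the divergence of $\sum\a_n$, and verify summability of the bubble terms via (\ref{area}) and (\ref{ar}). The only noticeable variant is your choice $V_n=B(0,R_n+a_n/2)$, offset halfway between consecutive layers so that $\partial V_n$ stays clear of the bubbles, whereas the paper simply takes $V_n=U_n$ and dispatches the stray case $z\in E$ in one line.
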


\begin{proof} 
By (\ref{log-estimate}) and (\ref{ar}), there exists a constant $c>0$ such that, for every $k\ge k_1$, 
$$
\# X_k \, \vp(r_k)^{1+\ve} \le c \, \frac {(\log k)^2} {k^{\ve(d-1)}}\,.
$$
So (\ref{s-weak}) holds,   if $k'$ is sufficiently large.

We next claim that, for every $k\ge k_1$, the set $A$ is unavoidable. Indeed, let us fix  $k\ge k_1$ and $z\in\partial U_k$.
There exists $x\in B(z,a_k/3)\cap X_k$.  Let   $E:=\ov B(x,r_k)$. If $z\in E$, then   $ H_{U_{k+1}\setminus E} 1_E(z)=1$.
So let us assume that $z\notin E$ and let  
\begin{equation*} 
                           g(y):=\vp(r_k) \bigl(N(|y-x|)-\vp(a_k)\inv \bigr) , \qquad y\in\reald.
\end{equation*} 
Since $B(x,a_k)\subset U_{k+1}$, we know that $g\le 0$ on~$ \partial U_{k+1}$. 
Moreover,  $g\le 1$ on the boundary of~$E$.   By the minimum principle,
$$
           H_{U_{k+1}\setminus E} 1_E\ge g \on U_{k+1}\setminus E.
$$
Considering  the cases $d=2$ and $d\ge 3$ separately, we get that
$g(z) >\vp(r_k) a_k^{2-d}= \a_k$ and hence $H_{U_{k+1}\setminus E} 1_E(z)>\a_k$.

By Proposition \ref{nlogn}  and the divergence
 of  the series $\sum \a_k$, we conclude that $A$~is unavoidable. 
\end{proof} 

\section{Main result for the unit ball}

From now on we shall assume that $k_2\in\nat$, $k_2\ge k_1$, and  $f$ is a strictly positive increasing real function on   
$\bigl(0,m_{k_2}^{1-d}\,\bigr]$ such that 
\begin{equation}\label{f-def}
 \sum\nolimits_{k\ge k_2} \b_k f(\a_k^{d-1})<\infty.
\end{equation} 
We recall that, by (\ref{ar}),          $ \a_k^{d-1}\le m_k^{1-d}$ and hence $f(\a_k^{d-1})\le f(m_k^{1-d})$.

\begin{examples}{\rm
1. Given $\ve>0$, we may  choose $M\in\nat$,  
$M>1+2((d-1)\ve)\inv$, and define 
\begin{equation*} 
m_k:=k^M , \qquad \a_k:=k^{-(M+1)}, \qquad  \b_k:=k, \qquad f(t):=t^\ve,
\end{equation*} 
 since then  $\b_k f(\a_k^{d-1})=k^{-((M+1)(d-1)\ve-1)}$, where $(M+1)(d-1)\ve-1>1$.
An application of Theorem \ref{best} will then yield the results by Gardiner/Ghergu and Pres (Theorem C).

2. For a proof of Theorem \ref{main} we take 
\begin{equation*} 
   m_k:=p_n(k), \qquad  \a_k:= (k m_k)\inv, \qquad    \b_k:=k,   \qquad f(t):=\bigl(\log^{(n)} \frac 1t\bigr)^{-3}
\end{equation*} 
 (see Remark \ref{best-choice}). Then (\ref{f-def}) holds, since 
\begin{equation*} 
                                     \b_k  f(m_k^{1-d})= k\cdot (\log^{(n)} m_k^{d-1}\bigr)^{-3}\approx k^{-2}.
\end{equation*} 
}
\end{examples}

We now establish the following  general result (where $f$ denotes an arbitrary function satisfying (\ref{f-def})).
It implies Theorem \ref{main} for the unit ball, since $(\log^{(n)}(t))\inv \le  (\log^{(n+1)} (t))^{-3} $ for large~$t$.

\begin{theorem}\label{best}
Let $U$ be the unit ball and $\delta>0$. Then there exists $k'\ge k_2$ such that
the   union $A$ of all $ \ov B(x,r_x)$, $x\in X_k$, $k\ge k'$,  is  unavoidable, 
$U\setminus A$ is a~champagne subregion of $U$,  and 
\begin{equation}\label{best-est}
\sum\nolimits_{x\in X_A}  \vp(r_x) f (\vp(r_x))  < \delta.
\end{equation} 
 \end{theorem}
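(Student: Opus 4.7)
The plan is to apply Proposition~\ref{nlogn} with $V_k := U_k$ and, at each level $k\ge k'$ (for $k'\ge k_2$ chosen large enough), take $E_k := \bigcup_{x\in X_k}\ov B(x,r_k)\subset A\cap V_{k+1}$, i.e.\ just the bubbles on the single sphere $\partial U_k$. The sum estimate~(\ref{best-est}) is then elementary: by~(\ref{log-estimate}), $\#X_k\,\varphi(r_k)\approx\beta_k$, and since $\varphi(r_k)=a_k^{d-2}\alpha_k\le \alpha_k^{d-1}$ (because $a_k=\alpha_k/\beta_k\le\alpha_k$) and $f$ is increasing,
$$\sum_{x\in X_A}\varphi(r_x)\,f(\varphi(r_x))\ \le\ C\sum_{k\ge k'}\beta_k\,f(\alpha_k^{d-1}),$$
which by~(\ref{f-def}) can be made smaller than $\delta$ by taking $k'$ large.

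The core task is to show, with a constant $c=c(d)>0$ and for all $k\ge k_2$ and $z\in\partial U_k\setminus E_k$,
$$H_{V_{k+1}\setminus E_k}1_{E_k}(z)\ \ge\ c\,m_k\alpha_k;$$
combined with the divergence $\sum m_k\alpha_k=\infty$, Proposition~\ref{nlogn} then delivers the unavoidability of~$A$. I would establish this lower bound by means of the test function
$$g(y)\ :=\ \frac{1}{H_k}\sum_{x\in X_k}\varphi(r_k)\,G_{U_{k+1}}(y,x),\qquad H_k:=\sup_{y\in\partial E_k}\sum_{x\in X_k}\varphi(r_k)\,G_{U_{k+1}}(y,x),$$
where $G_{U_{k+1}}$ is the Green function of the ball $U_{k+1}$. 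As a positive sum of Green functions, $g$ is harmonic on $V_{k+1}\setminus X_k$, vanishes on $\partial U_{k+1}$, and satisfies $g\le 1$ on $\partial E_k$ by construction; the minimum principle thus gives $H_{V_{k+1}\setminus E_k}1_{E_k}\ge g$ on $V_{k+1}\setminus E_k$. It then suffices to prove (i)~$H_k\le C$ uniformly in $k$, and (ii)~$\sum_{x\in X_k}\varphi(r_k)\,G_{U_{k+1}}(z,x)\ge c\,m_k\alpha_k$.

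Both (i) and (ii) reduce to summing Green-function values at $X_k$. The key observation is that $z$ and all $x\in X_k$ sit at distance exactly $m_k a_k$ from $\partial U_{k+1}$, so the dominant contribution comes from the $\asymp m_k^{d-1}$ centres lying within geodesic distance $\asymp m_k a_k$ of $z$ on $\partial U_k$. In that window, $G_{U_{k+1}}(z,x)$ is comparable to $N(|z-x|)$ for $d\ge 3$ and to $\log(m_ka_k/|z-x|)$ for $d=2$; using the spherical count $\#\{x\in X_k:|x-z|\le t\}\asymp (t/a_k)^{d-1}$, a direct discretisation of the surface integral yields in both cases
$$\sum_{x\in X_k,\,|x-z|\le m_ka_k}\varphi(r_k)\,G_{U_{k+1}}(z,x)\ \asymp\ m_k\alpha_k,$$
while centres farther than $m_ka_k$ contribute only an $O(m_k\alpha_k)$, thanks to the boundary decay of $G_{U_{k+1}}$. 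Evaluating instead on $\partial B(x_0,r_k)\subset\partial E_k$, the diagonal term $\varphi(r_k)G_{U_{k+1}}(y,x_0)\asymp 1$ dominates, and the off-diagonal sum is still $\lesssim m_k\alpha_k\le 1$, so $H_k\asymp 1$. The main obstacle is executing these Green-function estimates cleanly and uniformly across $d=2$ and $d\ge 3$ while staying within the paper's self-imposed restriction to elementary estimates based on the global kernel $N$ and the minimum principle; the geometric mechanism is that the annulus width $m_ka_k$ both makes room for $\asymp m_k^{d-1}$ near bubbles whose Green-weighted contributions sum to the desired $m_k$-fold amplification of the single-bubble value $\alpha_k$, and keeps the normalising constant $H_k$ bounded.
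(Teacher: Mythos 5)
Your proposal is essentially correct as a strategy, but it is genuinely different from the paper's proof, and the difference matters for how cleanly the estimates come out. You apply Proposition~\ref{nlogn} with the coarse exhaustion $V_k:=U_k$ and aim for a \emph{single} hitting estimate $H_{U_{k+1}\setminus E_k}1_{E_k}\ge c\,m_k\alpha_k$ on $\partial U_k$, which indeed combined with $\sum m_k\alpha_k=\infty$ suffices. The paper instead inserts $m_k$ intermediate spheres $V_{M_k+j}=B(0,R_k+ja_k)$, $0\le j\le m_k$, between $U_k$ and $U_{k+1}$, and Lemma~\ref{important} produces the hitting bound $c\gamma_n=c\alpha_k$ for \emph{each} of these $m_k$ thin steps; Proposition~\ref{nlogn} then gives the same divergence. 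The two routes are logically equivalent in outcome, but the paper's decomposition buys a substantial technical simplification: within a single layer of width $a_k$, the potential of the bubble-layer measure $\mu_k=a_k^{d-1}\sum_{x\in X_k}\varepsilon_x$ can be compared directly with $G\sigma=G(\cdot,0)$, the radial Green function of $V_{n+1}$, which is explicit; only the one diagonal term $a_k^{d-1}G(\cdot,x)$ needs separate handling, and the ``off-diagonal'' part is controlled simply by $G\sigma$. Your version has to prove both $H_k\asymp1$ and the lower bound $\sum_x\varphi(r_k)G_{U_{k+1}}(z,x)\gtrsim m_k\alpha_k$ by a shell-by-shell summation over the full annulus of width $m_ka_k$, requiring two-sided boundary estimates for $G_{U_{k+1}}$ (roughly $G\asymp\min\{N(s),\delta^2 s^{-d}\}$ for $d\ge3$, with the logarithmic analogue for $d=2$) uniformly in $k$; this is doable with elementary means, but it is precisely the kind of work the intermediate-spheres trick is designed to avoid, and you correctly flag it as the main obstacle. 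The sum estimate~(\ref{best-est}) is handled identically to the paper via~(\ref{log-estimate}), (\ref{ar}), and (\ref{f-def}). One small omission: you do not verify that $U\setminus A$ is a champagne subregion, but that follows directly from the setup of Section~3 (pairwise disjointness of the bubbles and~(\ref{r10})).
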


We may immediately note that, by (\ref{log-estimate}) and (\ref{ar}),
\begin{equation}\label{k-est}
 \# X_k \, \vp(r_k)  f(\vp(r_k))\le C \b_k  f(\a_k^{d-1}),   \qquad k\ge k_2
\end{equation} 
($C$ being some constant). So it remains to show that $A$ is unavoidable.

To that end we introduce intermediate balls:
For every $k\ge k_2$, let $M_k:= k_2+\sum_{k_2\le i< k} m_i$, and for every  $1\le j\le m_k$ and $n:=M_k+j$,   let
\begin{equation*} 
  \g_n:=\a_k,\qquad  \rho_n:=R_k+j a_k \und  V_n:= B(0,\rho_n).
\end{equation*} 
Then $\sum_{n\ge k_2} \g_n=\sum_{k\ge k_2} m_k\a_k=\infty$,  $V_{M_k}=U_k$ and $V_{M_{k+1}}=U_{k+1}$. 
Moreover,  $\ov V_n\subset V_{n+1}$,  for every $n\in\nat$, and $V_n\uparrow B(0,1)$.

Estimates  for the potentials of the measures
\begin{equation*} 
                                \mu_k:=a_k^{d-1} \sum\nolimits_{x\in X_k} \ve_x, \qquad k\ge k_2,
\end{equation*} 
(note that $\|\mu_k\|\approx 1$, by (\ref{area})) on the  balls $V_n$, $n=M_k+1,M_k+2,\dots, M_k+m_k=M_{k+1}$,
will yield the following Lemma,  which combined with Proposition \ref{nlogn} finishes the proof of Theorem \ref{best}.

\begin{lemma}\label{important} There exists $c>0$ such that the following holds:
Let  $k\ge k_2$,  let $E$ denote the union of all $\ov B(x,r_x)$, $x\in X_k$, and let 
$0\le j<m_k$,   $n:=M_k+j$.            
Then
\begin{equation}\label{est-important}
                      H_{V_{n+1}\setminus E}1_{E} \ge c \g_n  \on \ov V_n.
\end{equation}
\end{lemma}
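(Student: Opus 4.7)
The plan is to build a harmonic test function $g$ on $V_{n+1}\sms E$ with $g\le 1$ on $\partial E$, $g=0$ on $\partial V_{n+1}$, and $g\ge c\a_k$ on $\ov V_n$, and then to apply the maximum principle to $g-H_{V_{n+1}\sms E}1_E$, which is harmonic on $V_{n+1}\sms E$ with boundary values $\le 0$; this gives $H_{V_{n+1}\sms E}1_E\ge g\ge c\a_k$ on $\ov V_n\sms E$, while the bound is trivial on $\ov V_n\cap E$. The natural candidate is a rescaled Green potential $g:=(\b_k/C)\,G^{V_{n+1}}\mu_k$ of the discrete measure $\mu_k:=a_k^{d-1}\sum_{x\in X_k}\ve_x$, whose total mass is $\approx 1$ by (\ref{area}). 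Since $\mu_k$ is supported on $X_k\subset E$, $g$ is harmonic on $V_{n+1}\sms E$ and vanishes on $\partial V_{n+1}$; what remains are two scale-matched estimates on $G^{V_{n+1}}\mu_k$.

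For the upper bound $G^{V_{n+1}}\mu_k\le C_1/\b_k$ on $\partial E$, I would fix $y\in\partial\ov B(x_0,r_k)$ with $x_0\in X_k$ and split the sum $G^{V_{n+1}}\mu_k(y)=\sum_x a_k^{d-1}G^{V_{n+1}}(y,x)$ into the diagonal term and the cross terms. The diagonal term is bounded by $a_k^{d-1}N(r_k)=a_k^{d-1}/\vp(r_k)=1/\b_k$ by (\ref{ar}). The cross-sum, by Harnack's inequality at $x_0$ (valid since $r_k\ll a_k$) and a Voronoi-cell comparison with the continuous integral over $\partial U_k$, is dominated by a constant multiple of $\int G^{V_{n+1}}(x_0,z)\,d\sigma_{R_k}(z)$, which by Newton's shell theorem together with $G^{V_{n+1}}(0,x)=N(|x|)-N(\rho_{n+1})$ equals $\approx (j+1)a_k$; since $m_k\a_k\le 1$ by assumption, this is at most $m_k a_k\le 1/\b_k$.

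For the lower bound $G^{V_{n+1}}\mu_k\ge c_1 a_k$ on $\ov V_n$, observe that for any $z\in\partial U_k$ the nearest center $x_0\in X_k$ lies within distance $a_k/3$ and contributes $a_k^{d-1}N(a_k/3)\approx a_k$, while the remaining bubbles jointly contribute $\approx (j+1)a_k$ by the same Voronoi / Newton-shell argument; thus $G^{V_{n+1}}\mu_k\ge c(j+1)a_k$ on $\partial U_k\sms X_k$. Since $G^{V_{n+1}}\mu_k$ is harmonic in $U_k$ with boundary values $\ge c(j+1)a_k$ (and $+\infty$ at the points of $X_k$), the minimum principle propagates this to $\ov U_k$. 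In the outer shell $V_{n+1}\sms\ov U_k$, $G^{V_{n+1}}\mu_k$ is harmonic with inner boundary value $\ge c(j+1)a_k$ and outer boundary value $0$; comparison with the explicit radial harmonic interpolant shows that it is $\ge c a_k$ at radius $\rho_n$, hence throughout $\ov V_n\sms\ov U_k$.

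With $C:=2C_1$, the rescaled function $g$ then satisfies $g\le 1/2$ on $\partial E$ and $g\ge (c_1/(2C_1))\a_k$ on $\ov V_n$; since $\g_n=\a_k$ in the essential range $1\le j<m_k$, the maximum principle concludes $H_{V_{n+1}\sms E}1_E\ge c\g_n$. The main technical obstacle is the Voronoi-cell (Riemann-sum) comparison between the discrete sum $\sum_{x\in X_k}a_k^{d-1}G^{V_{n+1}}(y,x)$ and the continuous integral $\int G^{V_{n+1}}(y,z)\,d\sigma_{R_k}(z)$, both for the cross-term upper bound (where $y$ is close to the singular point $x_0$) and for the lower bound on $\partial U_k$ (where $y$ sits at a ``gap'' of the covering $X_k$), with constants uniform in $k$ and $j$.
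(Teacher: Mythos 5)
Your approach is essentially identical to the paper's: both use the Green potential $G^{V_{n+1}}\mu_k$ of $\mu_k=a_k^{d-1}\sum_{x\in X_k}\ve_x$ as the comparison function, bound it above on $\partial E$ by splitting off the single nearby bubble from the remaining shell contribution, bound it below on $\ov V_n$, and conclude by the minimum principle with the same scaling (the factor $\b_k$ and the inequality $m_k\a_k\le 1$ playing exactly the roles you assign them). The paper organizes the Riemann-sum step through the normalized surface measure $\sigma$ on $\partial U_k$ and the identity $G\sigma=G(\cdot,0)$ rather than via Harnack and Voronoi cells, but this is a cosmetic difference and the resulting estimates $G\mu_k\approx a_k$ on $\partial V_n$ and $\le C/\b_k$ on $\partial E$ coincide with yours.
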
 

\begin{proof} 
Let $a:=a_k$, $\g:=\g_n=\a_k$, $r:=r_k$, and $\mu:=\mu_k$. Let $G$ denote the Green function for $V_{n+1}$
 and $\sigma$ the normalized surface measure on $\partial U_k$.
Observing that 
$$
G\sigma=G(\cdot,0)=\vp(|\cdot|)\inv-\vp(\rho_{n+1})\inv  \on V_{n+1}\setminus U_{k}, 
$$
$1/2\le \rho_k<\rho_{n+1}<1$,  $\rho_{n+1}-\rho_{k}=(j+1) a$, and $\rho_{n+1}-\rho_n= a$, 
we obtain 
\begin{equation*} 
G\sigma\approx  a\on \partial V_n \und   G\sigma\approx   (j+1)a \on  \ov B(x,r).
\end{equation*} 
For the moment we fix  $x\in X_k$. Since $X_k$ is almost uniformly distributed and $\|\mu\|\approx 1$,
we get  that, for some constant $C=C(d)\ge 1$,        
 \begin{equation*}
G\mu \ge   C\inv a \on \partial V_n \und 
G\mu\le  a^{d-1}G(\cdot,x)+ C  (j+1)a  \on \ov B(x,r) .
\end{equation*} 
Let $y\in\partial B(x,r)$. If $d\ge 3$, then $\vp(r) G(y,x)< \vp(r) |x-y|^{2-d} =1$. If $d=2$, then 
$G(y,x)\le G_{B(x,2)} (y,x)=\log (2/|x-y|)$, and hence $\vp(r) G(y,x)<2$. Therefore 
\begin{equation*} 
                           \g   G\mu(x)\le 2\g a^{d-1}\vp(r)\inv + C\g (j+1)a=(2+C(j+1)\g)a\le (2+C)a
\end{equation*} 
(where the last inequality follows from $m_k\g\le 1$).

So we know that $\g G\mu\le (2+C)a$ on  $\partial E$  and $G\mu\ge C\inv a$ on $\partial V_n$. 
By the minimum principle, (\ref{est-important}) follows with $c:=C\inv (2+C)\inv $. 
\end{proof} 

For an application in Section \ref{final} let us note the following.

\begin{corollary}\label{y-ball}
Let $y\in \reald$, $0<r< R\le 1$, $\g\in (0,1)$,  and \hbox{$\delta_y>0$}. Then  there exist a finite set $X_y$
 in  $B(y,R)\setminus \ov B(y,r)$ and  $0<s_x\le (R-|x-y|)/100$, $x\in X_y$, such that the closed balls $\ov B(x,s_x)$, $x\in X_y$,
are pairwise disjoint,
\begin{equation}\label{y-ball-fill} 
                 \sum\nolimits_{x\in X_y} \vp(s_x) f(\vp(s_x)) <\delta_y
\und
                  H_{B(y,R)\setminus A_y} 1_{A_y} \ge \g \on \ov B(y,r),
\end{equation} 
where $A_y$ is the union of all $\ov B(x,s_x)$, $x\in X_y$.
\end{corollary}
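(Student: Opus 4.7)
The plan is to deduce Corollary \ref{y-ball} from the ingredients of the previous proof---Proposition \ref{nlogn} and Lemma \ref{important}---by translation, rescaling, and truncation to a finite window of levels.

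\emph{Reduction.} By translation and scaling invariance of harmonic measure, it suffices to handle $y=0$ and $R=1$; write $\tilde r:=r/R\in(0,1)$. Having produced a finite $X'\subset B(0,1)\setminus\ov B(0,\tilde r)$ with radii $s'_{x'}$ for this normalized problem, one pulls back via $x'\mapsto y+Rx'$, $s_x:=Rs'_{x'}$. The constraint $s_x\le(R-|x-y|)/100$ is inherited from $s'_{x'}\le(1-|x'|)/100$, and since $R\le 1$, monotonicity of $\vp$ (in either dimensional case) together with monotonicity of $f$ gives $\vp(s_x)f(\vp(s_x))\le \vp(s'_{x'})f(\vp(s'_{x'}))$, so the capacity sum only shrinks.

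\emph{Truncated construction.} I work with the bubbles and exhaustion of Section 3 restricted to a finite range of levels. Choose $k_2'\ge k_2$ large enough that (i) $R_{k_2'}>\tilde r$, hence $\ov B(0,\tilde r)\subset U_{k_2'}=V_{M_{k_2'}}$, and (ii) $\sum_{k\ge k_2'}\#X_k\,\vp(r_k)f(\vp(r_k))<\delta_y$, which is possible by (\ref{k-est}) and (\ref{f-def}). Using divergence of $\sum m_k\a_k$, pick $k_3\ge k_2'$ with
\[
\prod_{k_2'\le k\le k_3}(1-c\a_k)^{m_k}\ \le\ 1-\g,
\]
where $c=c(d)$ is the constant supplied by Lemma \ref{important}. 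Let $A$ be the (now finite) union of $\ov B(x,r_x)$, $x\in X_k$, $k_2'\le k\le k_3$.

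\emph{Applying the criterion.} Apply Proposition \ref{nlogn} to $A\subset B(0,1)$ with the intermediate exhaustion $(V_n)$ from Section 5, taking $\g_n:=c\a_k$ for $n\in\{M_k+1,\dots,M_{k+1}\}$ with $k_2'\le k\le k_3$ and $\g_n:=0$ otherwise. Then (\ref{HUE}) holds trivially (with $E=\emptyset$) outside the active window and is exactly Lemma \ref{important} inside it. Inequality (\ref{criterion}) with $n=M_{k_2'}$ and $m>M_{k_3+1}$ yields
\[
H_{B(0,1)\setminus A}\,1_A\ \ge\ 1-\prod_{k_2'\le k\le k_3}(1-c\a_k)^{m_k}\ \ge\ \g\qquad\text{on }\ov U_{k_2'}\supset \ov B(0,\tilde r),
\]
and pulling back by the affine map delivers the corollary.

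The only real difficulty is bookkeeping: one must simultaneously force the $\vp\cdot f$ sum to be small, the inner ball $\ov B(0,\tilde r)$ to sit inside $U_{k_2'}$, and the truncated product to drop below $1-\g$, and do so in a compatible order. No new analytic input beyond Lemma \ref{important} is needed, and the cases $d=2$ and $d\ge 3$ are treated uniformly through the capacity function $\vp$.
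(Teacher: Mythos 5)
Your argument reproduces the paper's own proof almost verbatim: translation and scaling reduction to the unit ball, truncation to a finite window of levels chosen via (\ref{k-est}) and the divergence of $\sum m_k\a_k$, and then Lemma~\ref{important} fed into Proposition~\ref{nlogn} with $\g_n=0$ outside the window. The only slip is an off-by-one in the level indexing: Lemma~\ref{important} supplies the bound for $n=M_k+j$ with $0\le j<m_k$, so the active range should be $n\in\{M_k,\dots,M_{k+1}-1\}$ rather than $\{M_k+1,\dots,M_{k+1}\}$ (otherwise $\g_{M_{k+1}}=c\a_k$ would not be covered by the lemma, since $\a_{k+1}\le\a_k$); this does not affect the substance of the argument.
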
 

\begin{proof} By translation and scaling invariance, it suffices to consider the case, where $B(y,R)$ is the unit ball.   
 By (\ref{k-est}), there exist $ k', k''\in \nat$, $k_2\le k'<k''$,  such that $R_{k'}> r$,   
$$
               \sum_{k\ge k'}  \# X_{j} \vp(r_k) f(\vp(r_k)) <\delta_0, \und
       \prod_{M_{k'}\le n< M_{k''}} (1-c\g_n) \le 1-\g
$$
($c$ being the constant from Lemma \ref{important}).  Let $X_0$ denote the union of all $X_k$, $k'\le k<k''$. 
Then, by Lemma \ref{important} and Proposition~\ref{nlogn},   
the proof is finished taking $s_x:=r_x$, $x\in X_0$
(note that  (\ref{HUE}) trivially holds for $n\ge M_{k''}$ with $\g_n:=0$ and $E:=\emptyset$).    
\end{proof} 

\section{Main result for arbitrary connected open sets}\label{final}
 
Clearly, Theorem \ref{main} for arbitrary connected open sets   is a consequence of the following  general result,
where $f$ is a function as considered in Section 5.

\begin{theorem}\label{general-best}
Let $U\ne \emptyset$ be a connected open set in $\reald$, $d\ge 2$,  and \hbox{$\delta>0$}.\footnote{Even 
$U=\reald$ is admitted, but of interest only if $d\ge 3$ (for $d=2$ any small closed disc is unavoidable).}
Then there exists  a~champagne subregion $U\setminus A$ of $U$ such that $A$ is unavoidable and
\begin{equation*} 
\sum\nolimits_{x\in X_A}  \vp(r_x) f (\vp(r_x))  < \delta.
\end{equation*} 
 \end{theorem}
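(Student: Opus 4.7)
The plan is to globalize Theorem~\ref{best} using Corollary~\ref{y-ball} as a local building block: exhaust $U$ by bounded open sets, locally fill in bubbles around the boundary of each exhaustion level, and apply Proposition~\ref{nlogn} to infer unavoidability.

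Concretely, I fix an exhaustion $V_0 := \emptyset \subset V_1 \subset V_2 \subset \cdots$ of $U$ by bounded open sets with $\ov V_n \subset V_{n+1}$ and $V_n \uparrow U$. For each odd $n$, compactness of $\partial V_n \subset V_{n+1} \setminus \ov V_{n-1}$ yields a finite cover of $\partial V_n$ by open balls $B(z_i^{(n)}, r_i^{(n)})$, $1 \le i \le N_n$, with $\ov B(z_i^{(n)}, R_i^{(n)}) \subset V_{n+1} \setminus \ov V_{n-1}$ for some $R_i^{(n)} > r_i^{(n)}$. I apply Corollary~\ref{y-ball} to each such ball with $\g := 1/2$ and budget $\delta_{z_i^{(n)}} := \delta/(2^n N_n)$, obtaining bubble sets $A_i^{(n)} \subset B(z_i^{(n)}, R_i^{(n)}) \setminus \ov B(z_i^{(n)}, r_i^{(n)})$ with $H_{B(z_i^{(n)},R_i^{(n)}) \setminus A_i^{(n)}} 1_{A_i^{(n)}} \ge 1/2$ on $\ov B(z_i^{(n)}, r_i^{(n)})$ and $\sum \vp(s_x) f(\vp(s_x)) < \delta/(2^n N_n)$. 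Let $A$ be the union of all the $A_i^{(n)}$.

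To verify the hypothesis of Proposition~\ref{nlogn} I take $\g_n := 1/2$ for odd $n$ and $\g_n := 0$ for even $n$, so $\sum \g_n = \infty$. Given odd $n$ and $z \in \partial V_n$, I pick $i$ with $z \in B(z_i^{(n)}, r_i^{(n)})$ and set $E := A_i^{(n)} \subset A \cap V_{n+1}$. Since $\ov B(z_i^{(n)}, R_i^{(n)}) \subset V_{n+1}$ contains $E$, the standard monotonicity of harmonic measure---Brownian motion from $z$ which exits $B(z_i^{(n)}, R_i^{(n)}) \setminus E$ without hitting $E$ lands on $\partial B(z_i^{(n)}, R_i^{(n)}) \subset V_{n+1} \setminus E$ and may still hit $E$ subsequently---gives $H_{V_{n+1} \setminus E} 1_E(z) \ge H_{B(z_i^{(n)}, R_i^{(n)}) \setminus E} 1_E(z) \ge 1/2$. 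For even $n$ take $E := \emptyset$. Proposition~\ref{nlogn} then delivers unavoidability of $A$. The $\vp$-sum is bounded by $\sum_n \delta/2^n < \delta$; the condition $\sup_x r_x/\dist(x, \partial U) \le 1/100$ follows from $s_x \le (R_i^{(n)} - |x - z_i^{(n)}|)/100 \le \dist(x, \partial U)/100$; and $r_x \to 0$ as $x \to \infty$ (when $U$ is unbounded) is enforced by letting the internal parameter $k'$ of Corollary~\ref{y-ball} tend to infinity with $n$.

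The main obstacle is pairwise disjointness of the full collection of bubbles $\ov B(x, r_x)$. Across distinct odd shells $n \ne n'$ disjointness is automatic, since the bubbles lie in the disjoint annular regions $V_{n+1} \setminus \ov V_{n-1}$ and $V_{n'+1} \setminus \ov V_{n'-1}$. Within a single odd shell the covering balls $B(z_i^{(n)}, R_i^{(n)})$ generally overlap, and the configurations $A_i^{(n)}$ from different $i$ may conflict. I would resolve this by a Besicovitch refinement of the cover to bounded multiplicity $M = M(d)$, splitting it into $M$ sub-families of pairwise disjoint balls and routing each sub-family's bubbles into one of $M$ disjoint concentric sub-shells of $V_{n+1} \setminus \ov V_{n-1}$ (obtained by inserting $M$ intermediate exhaustion levels between $V_{n-1}$ and $V_{n+1}$). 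A cleaner alternative is a mild extension of Corollary~\ref{y-ball} in which the bubbles may avoid a prescribed finite obstruction set; both work because of the considerable slack available in the choice of the internal parameter $k'$.
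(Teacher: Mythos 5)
Your outline coincides with the paper's: exhaust $U$, apply Corollary~\ref{y-ball} locally near each $\partial V_n$, and feed the resulting estimates into Proposition~\ref{nlogn}. But the specific geometric arrangement you choose creates a disjointness problem that the paper avoids entirely, and your proposed repairs do not close the gap.

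You cover $\partial V_n$ by the \emph{inner} balls $B(z_i^{(n)},r_i^{(n)})$ and place the bubbles in the annuli $B(z_i^{(n)},R_i^{(n)})\setminus\ov B(z_i^{(n)},r_i^{(n)})$; since the big balls $B(z_i^{(n)},R_i^{(n)})$ must overlap whenever the cover does, the bubble clusters for distinct $i$ can collide. The paper inverts the scales: it chooses $Y_n\subset\partial V_n$ so that the small balls $B(y,b_n/6)$ are pairwise disjoint while the larger balls $B(y,b_n/2)$ cover $\partial V_n$, and it puts all bubbles for $y$ inside $B(y,b_n/6)\setminus\ov B(y,b_n/7)$. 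Disjointness is then automatic. The price is that a point $z\in\partial V_n$ need not lie in the small ball $B(y,b_n/6)$, only in $B(y,b_n/2)$; the paper pays this by a two-step minimum-principle argument, $H_{B\setminus E}1_E\ge\tfrac12 H_{B\setminus F}1_F$ with $F=\ov B(y,b_n/7)$ and $B=B(y,b_n)$, followed by a scaling-invariant lower bound $\eta$ for $H_{B(0,1)\setminus\ov B(0,1/7)}1_{\ov B(0,1/7)}$ on $\ov B(0,1/2)$. This is the idea missing from your argument.

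Your proposed fixes do not obviously work. Splitting the cover into $M$ Besicovitch sub-families and ``routing'' each sub-family into its own sub-shell does not, as stated, give the hypothesis of Proposition~\ref{nlogn}: a single sub-family is disjoint but is not a cover of $\partial V_n$, so after routing family $m$ into sub-shell $m$ there is no guarantee that every $z$ on the new intermediate boundary is caught by a bubble cluster from that sub-family; moreover ``routing'' shifts the bubbles away from the center $z_i^{(n)}$, which invalidates the estimate $H_{B(z_i^{(n)},R_i^{(n)})\setminus A_i^{(n)}}1_{A_i^{(n)}}\ge 1/2$ on $\ov B(z_i^{(n)},r_i^{(n)})$ coming from Corollary~\ref{y-ball}. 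The alternative — an obstacle-avoiding version of Corollary~\ref{y-ball} — is plausible but is not established in the paper and would require its own proof (one would need to argue that deleting the bubbles meeting a fixed finite obstruction changes the potential estimates in Lemma~\ref{important} only by a controllable amount). The paper's choice of radii $b_n/2$ (cover) versus $b_n/6$ (confinement) makes all of this unnecessary.
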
 

\begin{proof} 
Let us choose  bounded open sets $V_n\ne\emptyset$, $n\in\nat$, such that $\ov V_n\subset V_{n+1} $
and $V_n\uparrow U$. For every $n\in\nat$, we define 
$$
              b_n:=\min\bigl\{\frac 1n, \frac 12 \dist(\partial V_n, \partial V_{n-1}\cup \partial V_{n+1})\bigr\}
$$
(take $V_0:=\emptyset$) and choose a finite subset $Y_n$ of $\partial V_n$ such that the balls $B(y,b_n/2)$, 
$y\in Y_n$, cover $\partial V_n$ and the balls $B(y,b_n/6)$, $y\in Y_n$, are pairwise disjoint. 
For $y\in Y_n$, let \vglue-5mm
\begin{equation}\label{deltan}
 \delta_y:=\frac \delta{\# Y_n\cdot 2^n},
\end{equation} 
let $X_y$ be a finite set in $B(y,b_n/6)\setminus \ov B(y,b_n/7)$ and $0<s_x\le (b_n/6-|x-y|)/100$, $x\in X_y$,
such that (\ref{y-ball-fill}) holds with $\g=1/2$ and $r=b_n/7$ (see Corollary \ref{y-ball}). 

Let $X$ be the union of all $X_y$, $y\in Y_n$, $n\in\nat$. Of course,  for all $x\in X$,
$s_x/\dist(x,U^c)<1/100$, and $s_x\to 0$ if $x\to\infty$. Moreover,  by (\ref{y-ball-fill}) and (\ref{deltan}),
$$
    \sum\nolimits_{x\in X} \vp(s_x) f(\vp(s_x))  <\sum\nolimits_{n\in\nat} \sum\nolimits_{y\in Y_n}\delta_y=\delta.
$$
So it remains only to prove that the union $A$ of all $\ov B(x,s_x)$, $x\in X$, 
is unavoidable.  

To that end we define \footnote{Of course $2 \eta$ is easily determined: It is $\log 2/\log 7$, if $d=2$, and
$(2^{d-2}-1)/(7^{d-2}-1 )$, if $d\ge 3$.}  
\begin{equation*} 
      \eta:=\frac 12\,  \inf \{H_{B(0,1)\setminus \ov B(0,1/7)} 1_{\ov B(0,1/7)} (z)\colon |z|\le 1/2\}.
\end{equation*} 
Let us fix $n\in\nat$ and $z\in \partial V_n$.  There exists $y\in Y_n$ such that $|z-y|<b_n/2$. Let
$E$ be the union of all $\ov B(x,s_x)$, $x\in X_y$.  
We claim that
\begin{equation}\label{eta-est}
             H_{V_{n+1}\setminus E} 1_E (z)\ge \eta.
\end{equation} 
Then Proposition \ref{nlogn} (this time with $\a_n:=\eta$) will show that $A$ is unavoidable.

To prove the claim let $B:=B(y,b_n)$, $B':=B(y, b_n/6)$, and $F:=\ov B(y,b_n/7)$. By the minimum principle,   
$$
H_{V_{n+1}\setminus E}1_E\ge H_{B\setminus E} 1_E\ge  H_{B'\setminus E} 1_E,
$$
where $H_{B'\setminus E}1_E\ge 1/2$ on $F$, and hence $H_{B\setminus E}1_E\ge (1/2) H_{B\setminus F} 1_F$.
By translation and scaling invariance, we thus conclude that
$$
H_{V_{n+1}\setminus E}1_E(z) \ge \frac 12  H_{B\setminus F} 1_F(z) \ge \eta,
$$
that is, (\ref{eta-est}) holds and our proof is finished.
\end{proof} 

\bibliographystyle{plain} 

\def\cprime{$'$} \def\cprime{$'$}

{\small \noindent 
Wolfhard Hansen,
Fakult\"at f\"ur Mathematik,
Universit\"at Bielefeld,
33501 Bielefeld, Germany, e-mail:
 hansen$@$math.uni-bielefeld.de}\\
{\small \noindent Ivan Netuka,
Charles University,
Faculty of Mathematics and Physics,
Mathematical Institute,
 Sokolovsk\'a 83,
 186 75 Praha 8, Czech Republic, email:
netuka@karlin.mff.cuni.cz}

\end{document}